\newcommand{\E}{{\mathbb{E}}}
\newcommand{\R}{{\mathbb{R}}}
\renewcommand{\P}{{\mathbf{P}}}
\newcommand{\Q}{{\mathbf{Q}}}
\newcommand{\1}{\mathbf{1}}
\newcommand{\N}{\mathbb{N}}
\newcommand{\tr}{^\mathsf{T}}
\newcommand{\dd}{\mathrm{d}}
\newcommand{\lc}{[\![}
\begin{document}



\section{Introduction}

This note addresses Girsanov's question\footnote{See the remark on page~296 in \cite{Girsanov_1960}.} in the context of (not necessarily one-dimensional) solutions to stochastic differential equations: Under which conditions is a stochastic exponential a true martingale?  The condition provided here is of probabilistic nature and both sufficient and necessary. It relates the martingale property of a local martingale to the almost sure finiteness of a certain integral functional under a related measure.

To illustrate the condition informally, assume for a moment that the stochastic differential equation
\begin{align*}
	\dd X_t = b(t,X) \dd t + \sigma(t, X) \dd W_t, \qquad X_0 = x_0
\end{align*}
has a  weak solution $X$, defined on some probability space, for some progressively measurable functionals $b$ and ${\sigma}$.  Consider a progressively measurable functional $\mu$ and the corresponding nonnegative local martingale $Z,$ given by
\begin{align*}  
Z_t &:= \exp\left( \int_{0}^{t}\mu( s,X )\tr \mathrm{d} X_s -\int_{0}^{t} \left(\frac{1%
}{2} \mu( s,X)\tr a(s,X) \mu(s,X) + \mu(s,X)\tr b(s,X)\right) \mathrm{d}%
s\right),
\end{align*}
where $a = \sigma \sigma\tr$.  (Below we will generalize this setup to allow $X$ to explode and $Z$ to hit zero.)

First, Proposition~\ref{P existence sde} below shows that the stochastic differential equation
\begin{align*}
	\dd Y_t = \left(b(t,Y) + a(t, Y)  \mu(t,Y)\right)\dd t + \sigma(t, Y) \dd W_t, \qquad Y_0 = x_0
\end{align*}
also has a weak solution $Y$, at least up to the first time that the process $$K := \int_{0}^\cdot \mu( s,Y)\tr a(s,Y) \mu(s,Y) \mathrm{d} s$$ explodes.  Indeed, if $Z$ is a uniformly integrable martingale then Girsanov's theorem, applied to the Radon-Nikodym derivative $Z_\infty$, yields directly that a weak solution $Y$ exists and  that $K$ has probability zero to explode. This note shows that the reverse direction also holds; namely, if the process $K$ with an appropriate choice of weak solution $Y$ does not explode, then $Z$ is a uniformly integrable martingale. 
We refer the reader to Theorem~\ref{T 2} below for the precise statement.

The conditions in this note are sharp and hold under minimal assumptions but are purely probabilistic and, in particular, often require additional existence and uniqueness results to be applicable.  Two  examples in Section~\ref{S:ex} illustrate these subtle points. A third example highlights  the relevance of the underlying probability space.

\subsection*{Related literature}
The conditions in \cite{Kabanov/Liptser/Shiryaev:1979},  
\cite{Engelbert_Senf_1991}, \cite{BenAri}, and \cite{Blei_Engelbert_2009} are closely related to those discussed here, as they also involve the explosiveness of the quadratic variation of the local martingale's stochastic logarithm.   \cite{CFY} also studies the martingale property in the context of a martingale problem. \cite{McKean_1969}, \cite{Elworthy2010}, and \cite{Karatzas_Ruf_2013} work out a precise relationship between explosions of solutions to stochastic differential equations and the martingale property of related processes.

 \cite{Engelbert_Schmidt_1984} provides  analytic conditions on the functionals $b, \sigma$, and $\mu$ for the martingale property of the local martingale $Z$, in the context of time-homogeneous conditions.
 \cite{Stummer_1993} provides further analytic conditions if the dispersion function is the identity. In the one-dimensional case, a full analytic characterization of the  martingale property of $Z$ is provided by \cite{MU_martingale}.
In the specific setup of ``removing the drift,'' \cite{Rydberg_1997} and, in the context of stochastic volatility models, \cite{Sin} give easily verifiable conditions. \cite{Blanchet_Ruf_2012} describes a methodology to decide on the martingale property of a nonnegative local martingale, based on weak convergence considerations. For further pointers to a huge amount of literature in this area, we refer the reader to \cite{Ruf_Novikov}.

\section{Setup}
We now formally introduce the setup of this work. We first consider a specific martingale problem whose solution $\P$ is the starting point of our analysis. We then introduce a nonnegative $\P$--local martingale $Z$.  In Section~\ref{S:main} we shall then study a necessary and sufficient condition that $Z$ is a (uniformly integrable)  $\P$--martingale.

\subsection{Generalized local martingale problem}  \label{S:2.1}
Fix $d \in \N$, an open set $E\subset {\mathbb{R}}^{d}$, and a ``cemetery state''  $\Delta \notin \R^d$. Let   $\Omega$  denote the set of all these paths $\omega: [0,\infty) \rightarrow E \bigcup \{\Delta\}$ such that  $\omega(t) = \omega(t \wedge \bm{\zeta}(\omega))$ and  $\omega$ is continuous on $[0, \bm{\zeta}(\omega))$, where
$$\bm{\zeta}(\omega) :=\inf \{t \geq 0\mid \omega(t) = \Delta\}.$$
Here and in the following we use the convention $\inf \emptyset := \infty$.
Let $X$ denote the canonical process and $\mathbb{M}=(\mathcal{M}_t)_{t \geq 0}$ the right-continuous modification of the natural filtration generated by $X$  and set $\mathcal M := \mathcal M_\infty := \bigvee_{t \geq 0} \mathcal M_t$.  For all  closed sets $F \subset E$, introduce the stopping times  
$$\bm \rho_{F}:=\inf \{t \geq 0  
 \mid X_t \notin F\}.$$
For a probability measure $\P$ on  $(\Omega, \mathcal{M})$ and a stopping time $\bm \eta$, the measurable mapping $s: \Omega \rightarrow \Omega,\, \omega \mapsto \omega(\cdot \wedge \bm \eta)$ induces the push-forward measure $\P^{\bm \eta}$, given by $\P^{\bm \eta}(\cdot) = \P(s^{-1}(\cdot))$.  Similarly, for a stochastic process $Y$ and a stopping time ${\bm \eta}$ we write $Y^{\bm \eta}$ to denote the stopped version of $Y$; that is, $Y^{\bm \eta}_t = Y_{{\bm \eta} \wedge t}$ for each $t \geq 0$.

Call a function $g: [0,\infty )\times \Omega \rightarrow \R^n$, for some $n \in \N$, \emph{progressively measurable} if $g$,  restricted to $[0,t] \times \Omega$, is $\mathcal B([0,t]) \otimes \mathcal M_t$--measurable for each $t \geq 0$.
For example, the function $g$ is progressively measurable if $g(\cdot,\mathrm x) = \mathbf{g}(\mathrm x(\cdot))$ for all $\mathrm x \in \Omega$, where $\mathbf{g}: E \bigcup \{\Delta\} \rightarrow \R$ is measurable.

The next definition is in the spirit of Section~1.13 in \cite{Pinsky}:
\begin{definition}[Generalized local martingale problem]
\label{D generalized} 
Fix  an initial point $x_0 \in E$.  
Let  ${a}: [0,\infty)\times \Omega \rightarrow{\mathbb{R}}^{d \times d}$  and
${b}:[0,\infty)\times  \Omega \rightarrow{\mathbb{R}}^{d}$ 
 denote two progressively measurable functions such that the function ${a}$ is symmetric and non-negative definite.
\begin{itemize}
\item We call a probability measure $\P$ on  $(\Omega, \mathcal{M})$ a solution to the
 generalized local martingale problem corresponding to the quadruple $(E, x_0, {a}, {b})$ if $\P(X_0 = x_0) = 1$ and
there exists a nondecreasing sequence $(E_n)_{n \in \N_0}$ of closed subsets of $E$ with $E = \bigcup_{n \in \N_0} E_n$ such that $\P(\bm\rho_{E_n} = \bm\zeta<\infty) = 0$  and 
\begin{align*}
 f\left(X^{\bm\rho_{E_{n}}}_\cdot\right) - \int_{0}^{\cdot\wedge \bm\rho_{E_{n}}} \left(
\sum_{i=1}^{d} {b}_{i}(t,X) f_{x_{i}}(X_t) + \frac{1}{2}
\sum_{i,j=1}^{d} {a}_{i,j}(t,X) f_{x_{i},x_{j}}(X_t) \right) \mathrm{d} t
\end{align*}
is a $\P$--local martingale for each $n \in\mathbb{N}_0$ and twice continuously differentiable function $f: E \rightarrow \R$ 
with partial
derivatives $f_{x_{i}}$ and $f_{x_{i},x_{j}}$.
\item Given a stopping time ${\bm \eta}$
 we say that
a probability measure $\P$ is a solution to the
 generalized local martingale problem corresponding to the quadruple $(E, x_0, {a}, {b})$ on $\lc 0, {\bm \eta}\lc$ if  there exists a nondecreasing sequence of stopping times $({\bm \eta}_n)_{n \in \N}$ with $\lim_{n \uparrow \infty} {\bm \eta}_n = {\bm \eta}$, $\P$--almost surely, such that the push-forward measure $\P^{{\bm \eta}_n}$ is a solution to the generalized martingale problem corresponding to the quadruple $(E, x_0, {a}^n, {b}^n)$, for each $n \in \N$. Here,  ${a}^n(t,\mathrm x) :=  {a}(t,\mathrm x) \1_{t<{\bm \eta}_n(\mathrm{x})}$ and ${b}^n(t,\mathrm x) :=  {b}(t,\mathrm x) \1_{t<{\bm \eta}_n(\mathrm{x})}$ for all $(t, \mathrm x) \in [0,\infty) \times \Omega$.  \qed
\end{itemize}
\end{definition}

Observe that the initial point $x_0$ is fixed in Definition~\ref{D generalized}; in particular, the solution to a generalized local martingale problem here is not a family of probability measures indexed over the initial point, but one probability measure only. See, for example, \cite{Engelbert_2000} for this subtle point.  This weaker requirement allows us to apply the characterization of this note to a larger class of processes.

Throughout this note,  fix $d \in \N$, an open set $E\subset {\mathbb{R}}^{d}$, an initial point $x_0 \in E$,  and progressively measurable functions
$b: [0,\infty)\times\Omega  \rightarrow{\mathbb{R}}^{d}$ and $a: [0,\infty)\times \Omega \rightarrow{\mathbb{R}}^{d \times d}$, such that $a$ is symmetric and nonnegative definite. 
We shall work under the following assumption:

\begin{assumption} There exists a solution $\P$ to the generalized local martingale problem corresponding to $(E,x_0, a,b)$.
\qed
\end{assumption}

Various sufficient conditions for this standing assumption to hold are provided in Section~1.2 of \cite{Cherny_Engelbert} and in Sections~1.7--1.14 of \cite{Pinsky}.

\subsection{A nonnegative local martingale}  \label{S:2.2}
In this subsection, we introduce a $\P$--local martingale $Z$ as a stochastic exponential.  Towards this end, we fix a progressively measurable function $\mu:[0,\infty)\times\Omega  \rightarrow{\mathbb{R}}^{d}$ and make the following assumption:
\begin{assumption}  \label{A2}  We have
\[
\pushQED{\qed} 
	\P\left(\text{the function  $\, \, [0,\infty) \ni t \mapsto \int_0^{t \wedge \bm \zeta} \mu(s, X)\tr a(s,X) \mu(s,X) \dd s\,\,$ jumps to $\infty$}    \right) = 0.    \qedhere 
	\popQED
\]
\end{assumption}

Recall now  the nondecreasing sequence $(E_n)_{n \in \N_0}$ of Definition~\ref{D generalized} and consider the stopping times 
$$\widetilde{\bm{\tau}}_n  :=\inf \left\{t \geq 0 \left|   \int_0^{t} \mu(s, X)\tr a(s,X) \mu(s,X) \dd s > n\right.\right\}, \qquad \bm \theta_n := \widetilde{ \bm \tau}_n \wedge \bm \rho_{E_n} \wedge n$$ 
for all $n \in \N_0$, and $\bm\theta := \lim_{n \uparrow \infty} \bm\theta_n$.  Observe that  $\P(\bm\theta_n < \bm\theta)=1$ for all $n \in \N_0$ thanks to Standing Assumption~\ref{A2}. Therefore, the nondecreasing sequence $( \bm\theta_n)_{n \in \N}$ of stopping times announces $\bm \theta$.

Next, the processes $$M^n := \int_0^{\cdot \wedge \bm \theta_n}  \mu(s,X)   \tr \mathrm d \left(X(s) - \int_0^s b(t,X) \mathrm d t\right)$$ are well defined and indeed uniformly integrable $\P$--martingales, for all $n \in \N_0$. Moreover, for all $m,n \in \N_0$ with $m \leq n$, we have $M^m \equiv (M^n)^{\bm \theta_m}$, and thus, we may ``stick them together'' to obtain the process 
$$M := \sum_{n =1}^\infty M^n \1_{\lc \bm \theta_{n-1}, \bm \theta_n \lc},$$ 
which satisfies $M^{\bm \theta_n} \equiv M^n$ for all $n \in \N_0$ and thus, is a local martingale on $\lc 0, \bm \theta\lc$.
To provide some intuition, the process $M$ is the stochastic integral of the process $\mu(\cdot,X)$ with respect to the local martingale part of $X$ up to the first time that either $X$ or the stochastic integral explodes.
We also introduce the process   $\langle M \rangle$ by
 \begin{align*}  
\langle M\rangle_t &:=   \int_0^{t \wedge \bm \theta} \mu(s, X)\tr a(s,X) \mu(s,X) \dd s
\end{align*}
for all $t \geq 0$.

 Now, define the nonnegative process  $Z$ by
\begin{align}  \label{E T2 M}
Z_t &:= \exp\left(M_t -\frac{1}{2} \langle M\rangle_t\right)   \,\,  \text{for all $t < \bm \theta$} \qquad \text{and} \qquad Z_t := \lim_{s \uparrow \bm \theta } Z_s  \,\, \text{for all $t \geq \bm \theta$}. 
\end{align}
By the supermartingale convergence theorem, the limit always exists and  the process $Z$ is a nonnegative
continuous  $\P$--local martingale; see also  Lemma~4.14 and Appendix~A in  \cite{Ruf_Larsson}.   Consider now the stopping times
\begin{align*}
	{\bm{\tau}_n}   :=\inf \left\{t \geq 0 \left|   \langle M \rangle_t > n\right.\right\}
\end{align*}
for all $n \in \N_0$. Then we have ${\bm{\tau}_n}  \geq \widetilde{\bm{\tau}}_n$ and Novikov's condition yields that the  $\P$--local martingale $Z^{\bm{\tau}_n}$ is a uniformly integrable $\P$--martingale for each $n \in \N$. 

\section{Main result} \label{S:main}

We are interested in finding a necessary and sufficient condition for the nonnegative  $\P$--local martingale $Z$ to be a true  $\P$--martingale. The condition in this note is probabilistic in nature and is formulated under a certain probability measure that is 
a solution 
to the generalized local martingale problem corresponding to $(E,x_0,a, \widehat{b})$ on $\lc 0, \bm{\theta}\lc$, where
\begin{align*}
	 \widehat{b}(t,\mathrm x) := b(t,\mathrm x) + a(t,\mathrm x) \mu(t,\mathrm x) 
\end{align*}
for all $(t,\mathrm x) \in [0,\infty) \times \Omega$.
Note that if $Z$ is a uniformly integrable  $\P$--martingale then a solution to this generalized local martingale problem is given by $\Q$, defined by $\dd \Q =  Z_\infty \dd \P$, thanks to Girsanov's theorem.
The following result yields that a solution to this generalized local martingale problem exists even if $Z$ is not a  $\P$--martingale:

\begin{proposition}[Existence of a solution to the related martingale problem]
\label{P existence sde} 
The generalized local martingale problem corresponding to $(E,x_0,a, \widehat{b})$ on $\lc 0, \bm{\theta}\lc$  has a solution $\Q$ that also satisfies $(\dd \Q|_{\mathcal M_{\bm \tau_n}}) / (\dd \P|_{\mathcal M_{\bm \tau_n}})  = Z^{\bm \tau_n}_\infty$ for each $n \in \N$.
\end{proposition}

\begin{proof}
For any stopping time $\bm \eta$, define the sigma algebra
\begin{align*}
	\mathcal M_{\bm \eta-} := \sigma(X_0) \vee \sigma\left\{A \cap \{\bm \eta > t\}  \mid    A \in \mathcal M_t, t \geq 0\right\}.
\end{align*}
Here, $\sigma(X_0) \subset \mathcal M_0$ denotes the sigma algebra generated by $X_0$. 

Define now the sequence  $(\Q_n)_{n \in \N}$ of probability measures by $\mathrm{d}\Q_{n}=Z^{\bm{\tau}_n}_\infty\mathrm{d} \P$ and observe that $\Q_n(A) = \Q_m(A)$ for all $A \in \mathcal{M}_{(\bm{\tau}_n \wedge \bm{\tau}_m)-}$ and $n,m \in \N$.  Thus, the set function $\Q: \bigcup_{n \in \N} \mathcal{M}_{\bm{\tau}_n-} \rightarrow [0,1]$ with $A \mapsto \Q_n(A)$ for all $A \in \mathcal{M}_{\bm{\tau}_n-}$ is well defined. 
A standard extension theorem, such as Theorem~V.4.1 in \cite{Pa}, then yields that $\Q$ can be extended to a probability measure on $ \bigvee_{n \in \N} \mathcal{M}_{\bm{\tau}_n-}$; see also \cite{F1972} or Appendix~B in \cite{CFR2011}.  
We  may now extend this measure to a probability measure on $(\Omega,\mathcal{M})$; see Theorem~E.2 in  \cite{Perkowski_Ruf_2014} and use $\bigvee_{n \in \N} \mathcal{M}_{\bm{\tau}_n-} = \mathcal{M}_{(\lim_{n \uparrow \infty} \bm{\tau}_n)-}.$
With a slight misuse of notation, we again write $\Q$ for this probability measure, constructed via an extension argument.  

Next, fix $n \in \N$ and $A \in \mathcal M_{\bm \tau_n}$ and note that $\bm \tau_n(\omega) <  \bm \tau_{n+1}(\omega) $ on $\{\bm \tau_n<\infty\}$ since $\langle  M \rangle(\omega)$ is continuous and does not jump to infinity, for any $\omega \in \Omega$, by construction of the stopping time $\bm \theta$.  This then yields 
\begin{align*}
	\Q(A) &=  \Q(A \cap \{\bm\tau_n < \infty\}) + \Q(A \cap \{\bm \tau_n = \infty\}) \\&=  \E^{\P}\left[Z^{\bm \tau_{n+1}}_\infty \1_{A \cap \{\bm \tau_n < \infty\}}\right] + \E^{\P}\left[Z^{\bm \tau_{n}}_\infty \1_{A \cap \{\bm \tau_n = \infty\}}\right]
		=  \E^{\P}\left[Z^{\bm \tau_{n}}_\infty \1_A\right]
\end{align*}
since $A \cap \{\bm \tau_n = \infty\} \in  \mathcal M_{\bm \tau_n-}$.
The statement then follows.
\end{proof}

Note that it is a common approach to use a change of measure to prove the existence of a solution to a given martingale problem, as in the proof of Proposition~\ref{P existence sde}; see, for example, \cite{SV_multi}.  However, usually only \emph{equivalent} changes of measures are considered.

\begin{remark}  \label{R uniqueness}
	Observe that Proposition~\ref{P existence sde} does not make any assertion concerning the uniqueness of the measure $\Q$.  In general, such uniqueness does not hold. However, after fixing a probability measure $\P$ from the set of solutions to the  generalized local martingale problem corresponding to  $(E,x_0,a, b)$, the probability measure $\Q$ of Proposition~\ref{P existence sde} is uniquely determined on $\bigvee_{n \in \N} \mathcal{M}_{\bm{\tau}_n}$. 
\qed
\end{remark}

We are now ready to state a characterization of the martingale property of the  $\P$--local martingale $Z$:
\begin{theorem}[Characterization of martingale property]
\label{T 2} 
With  $\Q$ denoting the measure of Proposition~\ref{P existence
sde}, the following equivalences hold: The  $\P$--local martingale $Z$, given in \eqref{E T2 M}, is a $\P$--martingale if and only if 
\begin{align} \label{E suffCond} 
	\Q\left(\int_{0}^{t \wedge \bm \theta} \mu( s,X)\tr a(s,X) \mu(s,X)\mathrm{d} s < \infty\right) = 1
\end{align}
for all $t \geq 0$.  The  $\P$--local martingale $Z$ is a uniformly integrable $\P$--martingale if and only if 
\begin{align} \label{E suffCond2} 
	\Q\left(\int_{0}^{ \bm\theta} \mu( s,X)\tr a(s,X) \mu(s,X)\mathrm{d} s < \infty\right) = 1.
\end{align}
\end{theorem}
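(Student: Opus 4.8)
The plan is to reduce both equivalences to a computation of the expectations $\E^\P[Z_t]$ and $\E^\P[Z_\infty]$, and then to evaluate these against the F\"ollmer-type measure $\Q$ of Proposition~\ref{P existence sde}. Since $Z$ is a nonnegative $\P$--local martingale with $Z_0=1$, it is a supermartingale; hence $Z$ is a true $\P$--martingale if and only if $\E^\P[Z_t]=1$ for all $t\ge0$, and $Z$ is a uniformly integrable $\P$--martingale if and only if $\E^\P[Z_\infty]=1$, where $Z_\infty:=\lim_{t\uparrow\infty}Z_t$ exists by the supermartingale convergence theorem. Writing $\langle M\rangle_\infty:=\lim_{t\uparrow\infty}\langle M\rangle_t=\int_0^{\bm\theta}\mu(s,X)\tr a(s,X)\mu(s,X)\dd s$ and $\bm\tau_\infty:=\lim_{n\uparrow\infty}\bm\tau_n$, the conditions \eqref{E suffCond} and \eqref{E suffCond2} read exactly $\Q(\langle M\rangle_t<\infty)=1$ for all $t$ and $\Q(\langle M\rangle_\infty<\infty)=1$. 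Thus it suffices to prove the two identities $\E^\P[Z_t]=\Q(\langle M\rangle_t<\infty)$ for each $t\ge0$ and $\E^\P[Z_\infty]=\Q(\langle M\rangle_\infty<\infty)$.

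For the first identity I would fix $t$ and $n$ and use that $\{\bm\tau_n>t\}$ lies in $\mathcal M_t$ and in $\mathcal M_{\bm\tau_n}$. The absolute-continuity relation of Proposition~\ref{P existence sde} gives $\Q(\bm\tau_n>t)=\E^\P[Z^{\bm\tau_n}_\infty\1_{\{\bm\tau_n>t\}}]$, while the uniform integrability of the $\P$--martingale $Z^{\bm\tau_n}$ (Novikov) together with optional sampling yields $\E^\P[Z^{\bm\tau_n}_\infty\1_{\{\bm\tau_n>t\}}]=\E^\P[Z_{t\wedge\bm\tau_n}\1_{\{\bm\tau_n>t\}}]=\E^\P[Z_t\1_{\{\bm\tau_n>t\}}]$, the last step because $t\wedge\bm\tau_n=t$ on $\{\bm\tau_n>t\}$. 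Letting $n\uparrow\infty$ with $\{\bm\tau_n>t\}\uparrow\{\bm\tau_\infty>t\}$, monotone convergence on both sides produces $\Q(\bm\tau_\infty>t)=\E^\P[Z_t\1_{\{\bm\tau_\infty>t\}}]$. Because $\langle M\rangle$ is continuous and nondecreasing and cannot jump to $\infty$ by Standing Assumption~\ref{A2}, one has the pathwise set identity $\{\langle M\rangle_t<\infty\}=\{\bm\tau_\infty>t\}$, so the left-hand side is already $\Q(\langle M\rangle_t<\infty)$.

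It remains to discard the contribution of the explosion set, and this is the step I expect to be the crux. On $\{\bm\tau_\infty\le t\}=\{\langle M\rangle_t=\infty\}$ the clock $\langle M\rangle$ has reached $\infty$ by time $t$; a Dambis--Dubins--Schwarz time change represents $M$ as a Brownian motion run along $\langle M\rangle$, and the strong law $B_u-u/2\to-\infty$ forces $Z$ to be absorbed at $0$ there, so that $Z_t\1_{\{\bm\tau_\infty\le t\}}=0$ and hence $\E^\P[Z_t\1_{\{\bm\tau_\infty>t\}}]=\E^\P[Z_t]$. This establishes $\E^\P[Z_t]=\Q(\langle M\rangle_t<\infty)$ and with it the martingale equivalence. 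Care is needed here in justifying the time change and the absorption on the stochastic interval $\lc0,\bm\theta\lc$, and in checking the measurability of $\{\bm\tau_n>t\}$ and $\{\bm\tau_n=\infty\}$ with respect to $\mathcal M_{\bm\tau_n}$.

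The uniformly integrable case runs along the same lines but with the terminal time in place of $t$. Applying the relation of Proposition~\ref{P existence sde} to the set $\{\bm\tau_n=\infty\}\in\mathcal M_{\bm\tau_n}$, on which $Z^{\bm\tau_n}_\infty=Z_\infty$, gives $\Q(\bm\tau_n=\infty)=\E^\P[Z_\infty\1_{\{\bm\tau_n=\infty\}}]$; letting $n\uparrow\infty$ with $\{\bm\tau_n=\infty\}\uparrow\bigcup_n\{\bm\tau_n=\infty\}=\{\langle M\rangle_\infty<\infty\}$ and invoking monotone convergence yields $\Q(\langle M\rangle_\infty<\infty)=\E^\P[Z_\infty\1_{\{\langle M\rangle_\infty<\infty\}}]$. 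The same absorption argument shows $Z_\infty=0$ on $\{\langle M\rangle_\infty=\infty\}$, whence $\E^\P[Z_\infty]=\Q(\langle M\rangle_\infty<\infty)$ and the uniform-integrability equivalence follows.
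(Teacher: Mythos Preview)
Your argument is correct and in fact more streamlined than the paper's. Rather than treating the two implications separately, you establish the single identity $\E^\P[Z_t]=\Q(\langle M\rangle_t<\infty)$ (and its terminal analogue), from which both directions follow immediately because $Z$ is a nonnegative supermartingale with $Z_0=1$. The paper instead argues each implication on its own: for the forward direction it builds $\Q^Z$ via $\dd\Q^Z=Z_n\,\dd\P$, identifies it with $\Q$ on $\mathcal M_{(n\wedge\bm\theta)-}$, and then invokes Dambis--Dubins--Schwarz; for the reverse direction it first shows $\Q\ll\P$ on $\mathcal M_{t\wedge\bm\theta}$ and then identifies the Radon--Nikodym density process with $Z$. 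Both routes rest on exactly the same two ingredients---the absolute-continuity relation $(\dd\Q|_{\mathcal M_{\bm\tau_n}})/(\dd\P|_{\mathcal M_{\bm\tau_n}})=Z^{\bm\tau_n}_\infty$ from Proposition~\ref{P existence sde} and the DDS absorption $Z=0$ on $\{\langle M\rangle=\infty\}$---but your computation $\Q(\bm\tau_n>t)=\E^\P[Z_t\1_{\{\bm\tau_n>t\}}]$ followed by a single monotone passage to the limit packages them more economically and sidesteps the separate Radon--Nikodym construction. The caveats you flag (measurability of $\{\bm\tau_n>t\}$ and $\{\bm\tau_n=\infty\}$ in $\mathcal M_{\bm\tau_n}$, and the time change on $\lc 0,\bm\theta\lc$) are routine to verify.
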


\begin{proof}
	We start by assuming that $Z$ is a  $\P$--martingale. We need to show that  $\Q(A_n) = 0$ for the nondecreasing sequence of  events $(A_n)_{n \in \N}$, defined by 
	\begin{align*}
		A_n :=  \left\{ \langle M\rangle_{n \wedge \bm{\theta}} = \infty\right\} = \bigcap_{k \in \N} \{\bm \tau_k < n \wedge \bm \theta \} \in \mathcal{M}_{(n \wedge \bm{\theta})-}
	\end{align*}
for all $n \in \N$. Fix $n \in \N$ and observe  that the martingale property of $Z$ yields a measure $\Q^Z$, defined by $\dd \Q^Z = Z_n \dd \P= Z_{n \wedge \bm{\theta}} \dd \P$.  Since $ \mathcal{M}_{(n \wedge \bm{\theta})-} = \bigvee_{m \in \N} \mathcal{M}_{(n \wedge \bm{\tau}_m \wedge \bm\theta)-}$, it is easy to see that $\Q^Z|_{\mathcal{M}_{(n \wedge \bm{\theta})-}} = \Q|_{\mathcal{M}_{(n \wedge \bm{\theta})-}}$. Thus, we have $\Q(A_n) = \Q^Z(A_n) = \E^\P[Z_n \1_{A_n}] = 0$ since $Z_n = 0$ $\P$--almost surely on $A_n$ by the Dambis-Dubins-Schwarz theorem.

For the reverse direction,   note that
\begin{align*}
	\Q(A) = \lim_{n \uparrow \infty} \Q(A \cap \{\bm \tau_n > t\wedge \bm \theta\}) \leq  \lim_{n \uparrow \infty} \E^\P\left[Z^{ \bm \tau_n}_\infty  \1_A  \right] = 0
\end{align*}
for all $t \geq 0$ and $A \in \mathcal M_{t \wedge \bm \theta}$ with $\P(A) = 0$.  Here, we have used the assumption, namely that \eqref{E suffCond}  holds, in the first equality.
Thus, $\Q$ is absolutely continuous with respect to $\P$ on $\mathcal M_{t \wedge \bm \theta}$ for each $t \geq 0$.
Define now the  $\P$--martingale $R$ by $$R_t := \frac{\dd \Q|_{\mathcal M_{t \wedge \bm \theta}}}{\dd \P |_{\mathcal M_{t \wedge \bm \theta}}}$$ for each $t \geq 0$.  The fact that $R^{\bm\tau_n \wedge n} \equiv Z^{\bm \tau_n \wedge n}$ for each $n \in \N$ and taking limits then yield $R \equiv Z$. Thus, $Z$ is a $\P$--martingale.

The second equivalence is proven in the same way.
\end{proof}

We refer the reader to \cite{Musiela_1986}, \cite{Engelbert_Senf_1991}, \cite{Khos_Salminen_Yor}, and \cite{MU_integral} for analytic conditions that yield \eqref{E suffCond} in the case $d=1$. 
Theorem~\ref{T 2} extends Theorem~1 in \cite{BenAri} to a bigger class of stochastic differential equations; moreover, Proposition~\ref{P existence sde} yields that one does not need to assume the existence of the measure $\Q$, as it always exists.  We remark that in the one-dimensional time-homogeneous case, under some additional regularity conditions, an analytic characterization of the martingale property of $Z$ has been obtained; most notably, by   \cite{MU_martingale}. This characterization is given in terms of the
behavior of $X$ under $\P$ and $\Q$ at the boundary points of the
one-dimensional interval $E$.



\begin{corollary}[Pathwise integrability]  \label{C:1}
If 
\begin{align*} 
\int_{0}^{{t \wedge \bm \theta}(\mathrm x)} \mu(s, \mathrm x)\tr a(s,\mathrm x) \mu(s,\mathrm x)  \mathrm{d} s < \infty 
\end{align*}
holds  for all $(t,\mathrm x) \in[0,\infty) \times\Omega$ then $Z$ is a  $\P$--martingale.
Moreover, if 
\begin{align*} 
\int_{0}^{{\bm \theta}(\mathrm x)} \mu(s, \mathrm x)\tr a(s,\mathrm x) \mu(s,\mathrm x)  \mathrm{d} s < \infty 
\end{align*}
holds  for all $\mathrm x \in \Omega$ then $Z$ is a uniformly integrable $\P$--martingale.
\end{corollary}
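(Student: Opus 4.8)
The plan is to reduce the corollary directly to Theorem~\ref{T 2} by exploiting the canonical structure of the underlying space. The key observation is that, since $X$ is the canonical coordinate process on $\Omega$, one has $X_t(\mathrm x) = \mathrm x(t)$ for every $\mathrm x \in \Omega$; consequently the random variable $\int_0^{t \wedge \bm\theta} \mu(s,X)\tr a(s,X)\mu(s,X)\,\dd s$, evaluated at the path $\mathrm x$, coincides exactly with the deterministic integral $\int_0^{t \wedge \bm\theta(\mathrm x)} \mu(s,\mathrm x)\tr a(s,\mathrm x)\mu(s,\mathrm x)\,\dd s$ appearing in the hypothesis. The corollary is thus a statement to the effect that a pathwise (for all $\mathrm x$) condition is strictly stronger than the $\Q$--almost sure condition required by Theorem~\ref{T 2}.

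First I would fix $t \geq 0$ and note that the hypothesis of the first assertion says precisely that this integral is finite for \emph{every} $\mathrm x \in \Omega$. Hence the event $\{\int_0^{t \wedge \bm\theta}\mu(s,X)\tr a(s,X)\mu(s,X)\,\dd s < \infty\}$ equals all of $\Omega$, and therefore has probability one under every probability measure on $(\Omega,\mathcal M)$---in particular under the measure $\Q$ of Proposition~\ref{P existence sde}, no matter how it was constructed via the extension argument. This establishes condition \eqref{E suffCond} for every $t \geq 0$, and the first assertion then follows immediately from the first equivalence in Theorem~\ref{T 2}.

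For the uniform integrability statement I would argue identically, with $t \wedge \bm\theta$ replaced by $\bm\theta$: the stronger hypothesis makes the event in \eqref{E suffCond2} equal to all of $\Omega$, so $\Q$ assigns it probability one, and the second equivalence in Theorem~\ref{T 2} yields that $Z$ is a uniformly integrable $\P$--martingale.

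There is essentially no analytic obstacle here; the only point requiring a little care is the identification of the pathwise integral with the evaluation of the corresponding functional of the canonical process at $\mathrm x$, which is immediate from the definition of $X$. In particular, no measurability or integrability issue arises, since a property that holds for all $\mathrm x \in \Omega$ holds almost surely under any probability measure, and this is exactly what decouples the pathwise hypothesis from the otherwise delicate construction of $\Q$.
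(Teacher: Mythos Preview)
Your argument is correct and is exactly the approach taken in the paper: the paper's entire proof is the single sentence ``The statement follows directly from Theorem~\ref{T 2},'' and what you have written is simply a careful unpacking of why that is so. The only additional content you supply---that a set equal to all of $\Omega$ has $\Q$--probability one regardless of how $\Q$ was extended---is precisely the implicit step the paper leaves to the reader.
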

\begin{proof}
	The statement follows directly from Thereom~\ref{T 2}.
\end{proof}

\begin{remark}
We emphasize certain caveats concerning Theorem~\ref{T 2}:
\begin{itemize}
\item The choice of a solution to the generalized local martingale problem corresponding to $(E,x_0,a, b)$   matters for the question whether the  local martingale $Z$ is a  martingale.  Indeed, as Example~\ref{ex1} illustrates, the  local martingale $Z$ might be a true  martingale under one measure and a strict local martingale under another measure.  
\item However, the choice of  measure $\Q$ among the ones that satisfy the conditions in Proposition~\ref{P existence sde}, namely the ones that agree on $\bigvee_{n \in \N} \mathcal M_{\bm \tau_n}$, is not relevant. This is due to the fact that  \eqref{E suffCond} and \eqref{E suffCond2} hold either for all such probability measures with the prescribed ``local'' distribution or for none. (See also Remark~\ref{R uniqueness}.)
\item The generalized local martingale problem corresponding to $(E,x_0,a, \widehat{b})$ might have a solution that is unique  among the subset of non-explosive solutions, but that is not unique among all solutions.  Nevertheless, Theorem~\ref{T 2} may be applied, but  the probability measure $\Q$ needs to be chosen carefully. See Example~\ref{ex2} for an illustration.
\item We have not assumed that the  $\P$--local martingale $Z$ is strictly positive. For example, consider the parameter constellation $d=1$, $E = (0,\infty)$, $x_0=1$, and
$$a(t,\mathrm x) = 1, \qquad b(t,\mathrm x) =  0, \qquad \mu(t,\mathrm x) = \mathbf{1}_{\bm \zeta(\mathrm x) > t } \frac{1}{\mathrm x(t)}$$ for all $(t, \mathrm x) \in [0,\infty) \times \Omega$.  The solution to the  generalized local martingale problem corresponding to $(E, x_0, a, b) = ((0,\infty),1,1,0)$ then is Brownian motion killed when hitting zero and is unique. In particular, the stopping time $\bm \theta$ of Subsection~\ref{S:2.2} is the first time that the Brownian motion leaves $E$; that is, $\bm \theta = \bm \zeta$. Note that the  $\P$--local martingale $Z$ is a  $\P$--Brownian motion stopped in zero.  Now, under $\Q$,  the unique solution to the generalized local martingale problem corresponding to $(E, x_0, a, \mu)$, the process $X$ is a three-dimensional Bessel process. In particular, argued for example via Feller's test of explosions, we have $\Q(\bm \zeta = \infty) = 1$ and thus
\begin{align*}
	\Q\left(\int_0^t \frac{1}{X_s^2} \dd s < \infty\right) = 1
\end{align*} 
for all $t \geq 0$, which yields \eqref{E suffCond}.  However,   \eqref{E suffCond2} fails. Thus we obtain the obvious statement that the  $\P$--local martingale $Z$ is a true  $\P$--martingale, but not uniformly integrable.
\item The statement of Corollary~\ref{C:1} is wrong, in general, if we replace the underlying filtered space $(\Omega, \mathcal M, \mathbb M)$ by the space of $E$--valued continuous paths, along with the right-continuous modification of the canonical filtration.  This is illustrated in Example~\ref{ex3}.
\qed
\end{itemize}
\end{remark}

\section{Examples}  \label{S:ex}
The examples of this section illustrate the subtle points in the application of Theorem~\ref{T 2} and Corollary~\ref{C:1}.

\begin{example}[Non-uniqueness]  \label{ex1}
	Let $d=1$, $E = (0,\infty),$ and $x_0 = 1$.  
	Set 
\begin{align*}
	a(t,\mathrm x) =  \1_{\mathrm x(t) \neq 1}, \qquad b(t, \mathrm x) = \mathbf{1}_{\bm \zeta(\mathrm x) > t }   \1_{\mathrm x(t) \neq 1}  \frac{1}{\mathrm x(t)}, \qquad \mu(t, \mathrm x) =  -b(t,\mathrm x)
\end{align*}
 for all $(t, \mathrm x) \in [0,\infty) \times \Omega$.
 The generalized local martingale problem corresponding to the quadruple $(E, x_0, a, b)$ has a solution $\P_1$; indeed $\P_1(X_\cdot \equiv 1)=1$ satisfies all conditions.  However, the solution is not unique. Another solution $\P_2$ would be the one corresponding to the three-dimensional Bessel process, started in one.  

	Observe that the process $Z$ is  a  local martingale in each case.  In the first case, it is almost surely constant, that is, $\P_1(Z_\cdot\equiv 1) = 1$,  and thus the process $Z$ is a (uniformly integrable) $\P_1$--martingale. In the second case, It\^o's formula yields that $Z$ is distributed as the reciprocal of a three-dimensional Bessel process and thus, a strict $\P_2$--local martingale.

	Consider now the generalized local martingale problem corresponding to  the quadruple $(E, x_0, a, b+a \mu) = ((0,\infty), 1, a, 0)$, which also has a solution according to Proposition~\ref{P existence sde}.  Indeed, it has several solutions, in particular $\Q_1 \equiv \P_1$ and the Brownian motion measure $\Q_2$.  Note that \eqref{E suffCond} with $\Q=\Q_1$ holds but with $\Q=\Q_2$ fails.  This observation is consistent with the fact that $Z$ is a $\P_1$--martingale but a strict $\P_2$--local martingale.
\qed
\end{example}

The next example illustrates that the choice of the probability measure $\Q$ in Theorem~\ref{T 2} is highly relevant if several solutions exist to the  generalized local martingale problem corresponding to  the quadruple $(E, x_0, a, b+a \mu)$.

\begin{example}[Uniqueness of non-explosive solution]  \label{ex2}
	Let $d=1$, $E = \R$, and $x_0 = 0$.  
	Set 
$$a(t,\mathrm x) =1-\1_{\min_{s \leq t} \{\mathrm x(s)\} = 0 = \max_{s \leq t} \{\mathrm x(s)\}}, \qquad b(t, \mathrm x) =  0, \qquad \mu(t, \mathrm x) =  (\mathrm x(t))^2 \mathbf{1}_{\bm \zeta(\mathrm x) > t  }$$ for all $(t, \mathrm x) \in [0,\infty) \times  \Omega$. Again, the
  generalized local martingale problem corresponding to the quadruple $(E, x_0, a, b)$ has several solutions; for example $\P_1$ such that $\P_1(X_\cdot \equiv 0) = 1$ and the Brownian motion measure $\P_2$.

Consider now the generalized local martingale problem corresponding to  the quadruple $(E, x_0, a, b+a \mu) = (\R, 0, a, a \mu)$.  Clearly, it has several solutions, in particular the constant process with $\Q_1 \equiv \P_1$ and, moreover, $\Q_2$, under which $X$ satisfies the stochastic differential equation
\begin{align*}
	X_t =  \int_0^t X_s^2 \dd s + W_t
\end{align*}
for each $t \geq 0$ for some $\Q_2$--Brownian motion $W$ up to an explosion time, which is finite $\Q_2$--almost surely by Feller's test of explosions.  Indeed, it is easy to see that the choice of parameters in this example implies that any solution to the  generalized local martingale problem corresponding to the quadruple $(\R, 0, a, a \mu)$  is a process that is either constant zero or explodes almost surely.  Thus, this generalized local martingale problem has a unique non-explosive solution.

However, note that Theorem~\ref{T 2} does rely on a certain choice of solution $\Q$, which does not always correspond to $\Q_1$.  In particular, $Z$ here is a (uniformly integrable)  $\P_1$--martingale but a strict $\P_2$--local martingale.
\qed
\end{example}

\begin{example}[Role of the underling probability space]\label{ex3}
 We consider now, instead of the filtered space $(\Omega, \mathcal M, \mathbb M)$ of Subsection~\ref{S:2.1} the filtered probability space $(\Omega', \mathcal M', \mathbb M')$, where $\Omega' = C([0,\infty), E)$ denotes the space of $E$--valued continuous paths with canonical process $X'$,  $\mathbb{M}'=(\mathcal{M}_t')_{t \geq 0}$ denotes the right-continuous modification of the natural filtration generated by $X'$, and $\mathcal M' = \bigvee_{t \geq 0} \mathcal M_t'$ denotes the smallest sigma algebra that makes $X'$ measurable. Note that $\Omega' \subsetneq \Omega.$
  Exactly as in Subsection~\ref{S:2.1}, we can now introduce the notions of progressive measurability and solutions $\P'$ to the generalized local martingale problem.  Moreover, given such a solution $\P'$, we can introduce a $\P'$--local martingale $Z'$ exactly as in Subsection~\ref{S:2.2}.
  
    Let now $d=1$, $E=\R$, and $x_0 =0$.  Moreover, for some fixed $T>0$ set $$a(t,\mathrm x') = 1, \qquad b(t,\mathrm x') =  0, \qquad \mu(t,\mathrm x') = (\mathrm x'(t))^2 \1_{t \leq T}$$ for all $(t, \mathrm x') \in [0,\infty) \times \Omega'$.
    Then, there exists a unique solution $\P'$ to the generalized local martingale problem corresponding to $(E,x_0,a,b) = (\R, 0,1,0)$ on $(\Omega',\mathcal M')$. Indeed, $\P'$ corresponds to the Wiener measure on $(\Omega, \mathcal M')$.
    
   Next, the $\P'$--local martingale $Z'$,  given by
   \begin{align*}
   	Z_t' = \exp\left(\int_0^t (X_s')^2 \dd X_s' - \frac{1}{2} \int_0^t (X_s')^4 \dd s \right)
   \end{align*}
for all $t \geq 0$,   is not a $\P'$--martingale (see Section~3.7 in \cite{McKean_1969}). Thus, there exists $u>0$ such that $\E^{\P'}[Z_u]<1$ and we set $T=u$.  Note that
\begin{align*}
	\int_0^\infty \mu(s, \mathrm x')\tr a(s,\mathrm x') \mu(s,\mathrm x')  \mathrm{d} s  = \int_0^T (\mathrm x'(s))^4 \mathrm{d} s   < \infty 
\end{align*}
by continuity of the path $\mathrm x'$, for all $\mathrm x' \in \Omega'$.  This shows that the assertion of  Corollary~\ref{C:1} is wrong, in general, if we replace $(\Omega, \mathcal M, \mathbb M)$  by $(\Omega', \mathcal M', \mathbb M')$.

To understand, why the assumption of Corollary~\ref{C:1} is  not satisfied if we replace $\Omega'$ by $\Omega$ fix the path $\mathrm x \in \Omega \setminus \Omega'$ with $\mathrm x(t) = \tan(t \pi/(2 T)) \1_{t < T} + \Delta \1_{t \geq T}$ for all $t \geq 0$. Then, we have $\int_0^T (\mu(t, \mathrm x))^2 \dd t = \infty$.
\qed
\end{example}


\begin{thebibliography}{99}



\bibitem{BenAri}
Iddo Ben-Ari and Ross~G. Pinsky, \emph{Absolute continuity/singularity and
  relative entropy properties for probability measures induced by diffusions on
  infinite time intervals}, Stochastic Process. Appl. \textbf{115} (2005),
  no.~2, 179--206. \MR{2111192 (2005j:60145)}

\bibitem{Blanchet_Ruf_2012}
Jose Blanchet and Johannes Ruf, \emph{A weak convergence criterion constructing
  changes of measure}, Preprint, arXiv:1208.2606, 2015.

\bibitem{Blei_Engelbert_2009}
Stefan Blei and Hans-J{\"u}rgen Engelbert, \emph{On exponential local
  martingales associated with strong {M}arkov continuous local martingales},
  Stochastic Process. Appl. \textbf{119} (2009), no.~9, 2859--2880. \MR{2554031
  (2011e:60092)}

\bibitem{CFR2011}
Peter Carr, Travis Fisher, and Johannes Ruf, \emph{On the hedging of options on
  exploding exchange rates}, Finance Stoch. \textbf{18} (2014), no.~1,
  115--144. \MR{3146489}

\bibitem{CFY}
Patrick Cheridito, Damir Filipovi{\'c}, and Marc Yor, \emph{Equivalent and
  absolutely continuous measure changes for jump-diffusion processes}, Ann.
  Appl. Probab. \textbf{15} (2005), no.~3, 1713--1732. \MR{2152242
  (2006c:60099)}

\bibitem{Cherny_Engelbert}
Alexander~S. Cherny and Hans-J{\"u}rgen Engelbert, \emph{Singular stochastic
  differential equations}, Lecture Notes in Mathematics, vol. 1858,
  Springer-Verlag, Berlin, 2005. \MR{2112227 (2005j:60002)}

\bibitem{Elworthy2010}
K.~David Elworthy, Yves Le~Jan, and Xue-Mei Li, \emph{The geometry of
  filtering}, Frontiers in Mathematics, Birkh\"auser Verlag, Basel, 2010.
  \MR{2731662 (2012e:58069)}

\bibitem{Engelbert_2000}
Hans-J{\"u}rgen Engelbert, \emph{Existence and non-existence of solutions of
  one-dimensional stochastic equations}, Probab. Math. Statist. \textbf{20}
  (2000), no.~2, Acta Univ. Wratislav. No. 2256, 343--358. \MR{1825647
  (2002f:60113)}

\bibitem{Engelbert_Schmidt_1984}
Hans-J{\"u}rgen Engelbert and Wolfgang Schmidt, \emph{On exponential local
  martingales connected with diffusion processes}, Math. Nachr. \textbf{119}
  (1984), 97--115. \MR{774179 (86k:60079)}

\bibitem{Engelbert_Senf_1991}
Hans-J{\"u}rgen Engelbert and Torsten Senf, \emph{On functionals of a {W}iener
  process with drift and exponential local martingales}, Stochastic Processes
  and Related Topics ({G}eorgenthal, 1990), Math. Res., vol.~61,
  Akademie-Verlag, Berlin, 1991, pp.~45--58. \MR{1127879 (92h:60072)}

\bibitem{F1972}
Hans F{\"o}llmer, \emph{The exit measure of a supermartingale}, Z.
  Wahrscheinlichkeitstheorie und Verw. Gebiete \textbf{21} (1972), 154--166.
  \MR{0309184 (46 \#8294)}

\bibitem{Girsanov_1960}
Igor~V. Girsanov, \emph{On transforming a class of stochastic processes by
  absolutely continuous substitution of measures}, Teor. Verojatnost. i
  Primenen. \textbf{5} (1960), 314--330. \MR{0133152 (24 \#A2986)}

\bibitem{Kabanov/Liptser/Shiryaev:1979}
Yuri~M. Kabanov, Robert~{\v{S}}. Lipcer, and Albert~N. {\v{S}}irjaev,
  \emph{Absolute continuity and singularity of locally absolutely continuous
  probability distributions. {I}}, Mat. Sb. (N.S.) \textbf{107(149)} (1978),
  no.~3, 364--415, 463. \MR{515738 (80e:60056a)}

\bibitem{Karatzas_Ruf_2013}
Ioannis Karatzas and Johannes Ruf, \emph{Distribution of the time to explosion
  for one-dimensional diffusions}, Probability Theory and Related Fields
  \textbf{forthcoming} (2015).

\bibitem{Khos_Salminen_Yor}
Davar Khoshnevisan, Paavo Salminen, and Marc Yor, \emph{A note on a.s.\
  finiteness of perpetual integral functionals of diffusions}, Electron. Comm.
  Probab. \textbf{11} (2006), 108--117 (electronic). \MR{2231738 (2007g:60094)}

\bibitem{Ruf_Larsson}
Martin Larsson and Johannes Ruf, \emph{Convergence of local supermartingales
  and {N}ovikov-{K}azamaki-type conditions for processes with jumps}, Preprint,
  arXiv:1411.6229, 2014.

\bibitem{McKean_1969}
Henry~P. McKean, Jr., \emph{Stochastic integrals}, Probability and Mathematical
  Statistics, No. 5, Academic Press, New York-London, 1969. \MR{0247684 (40
  \#947)}

\bibitem{MU_integral}
Aleksandar Mijatovi{\'c} and Mikhail Urusov, \emph{Convergence of integral
  functionals of one-dimensional diffusions}, Electron. Commun. Probab.
  \textbf{17} (2012), no. 61, 13. \MR{3005734}

\bibitem{MU_martingale}
Aleksandar Mijatovi{\'c} and Mikhail Urusov,  \emph{On the martingale property of certain local martingales},
  Probab. Theory Related Fields \textbf{152} (2012), no.~1-2, 1--30.
  \MR{2875751 (2012m:60101)}

\bibitem{Musiela_1986}
Marek Musiela, \emph{On {K}ac functionals of one-dimensional diffusions},
  Stochastic Process. Appl. \textbf{22} (1986), no.~1, 79--88. \MR{852384
  (87k:60184)}

\bibitem{Pa}
K.~R. Parthasarathy, \emph{Probability measures on metric spaces}, Probability
  and Mathematical Statistics, No. 3, Academic Press, Inc., New York-London,
  1967. \MR{0226684 (37 \#2271)}

\bibitem{Perkowski_Ruf_2014}
Nicolas Perkowski and Johannes Ruf, \emph{Supermartingales as {R}adon-{N}ikodym
  densities and related measure extensions}, Annals of Probability
  \textbf{forthcoming} (2014).

\bibitem{Pinsky}
Ross~G. Pinsky, \emph{Positive harmonic functions and diffusion. an integrated
  analytic and probabilistic approach}, Cambridge Studies in Advanced
  Mathematics, vol.~45, Cambridge University Press, Cambridge, 1995.
  \MR{1326606 (96m:60179)}

\bibitem{Ruf_Novikov}
Johannes Ruf, \emph{A new proof for the conditions of {N}ovikov and
  {K}azamaki}, Stochastic Process. Appl. \textbf{123} (2013), no.~2, 404--421.
  \MR{3003357}

\bibitem{Rydberg_1997}
Tina~Hviid Rydberg, \emph{{A note on the existence of unique equivalent
  martingale measures in a {M}arkovian setting}}, Finance Stoch. \textbf{1}
  (1997), no.~3, 251--257.

\bibitem{Sin}
Carlos~A. Sin, \emph{Complications with stochastic volatility models}, Adv. in
  Appl. Probab. \textbf{30} (1998), no.~1, 256--268. \MR{1618849 (99a:90044)}

\bibitem{SV_multi}
Daniel~W. Stroock and S.~R.~Srinivasa Varadhan, \emph{Multidimensional
  diffusion processes}, Classics in Mathematics, Springer-Verlag, Berlin, 2006,
  Reprint of the 1997 edition. \MR{2190038 (2006f:60005)}

\bibitem{Stummer_1993}
Wolfgang Stummer, \emph{The {N}ovikov and entropy conditions of
  multidimensional diffusion processes with singular drift}, Probab. Theory
  Related Fields \textbf{97} (1993), no.~4, 515--542. \MR{1246978 (95g:60057)}
  
  
  
  \end{thebibliography}

\ACKNO{I am deeply indebted to Jose Blanchet, Zhenyu Cui, Hans-J\"urgen Engelbert, Ioannis Karatzas, Martin Larsson, and Nicolas Perkowski for very helpful comments and discussions on the subject matter of this note.  I am very grateful to an anonymous referee for her or his very careful reading and suggestions, which have improved this note significantly.  I acknowledge generous support from the Oxford-Man Institute of Quantitative Finance, University of Oxford.}

\end{document}